\newtheorem{Theorem}{Theorem}[section]
\newtheorem{Lemma}[Theorem]{Lemma}
\newtheorem{Proposition}[Theorem]{Proposition}
\newtheorem{Remark}[Theorem]{Remark}
\def \dim{{\mbox {dim}}\,}
\def\Z{{\mathbb Z}}
\def\R{{\mathbb R}}
\def \re{{\mathbb R}}
\def \T{{\mathbb T}}
\def \H{{\bar{H}}}
\def \0{\lambda_{0}}
\def \ga{\gamma}
\def\ep{{\epsilon}}
\begin{document}
\title[Stability of critical hypersurfaces]{On the stability of Ma\~n\'e critical hypersurfaces}

\author[L. Macarini]{Leonardo Macarini}
\address{Universidade Federal do Rio de Janeiro, Instituto de Matem\'atica,
Cidade Universit\'aria, CEP 21941-909 - Rio de Janeiro - Brazil}
\email{leonardo@impa.br}

\author[G.P. Paternain]{Gabriel P. Paternain}
 \address{ Department of Pure Mathematics and Mathematical Statistics,
University of Cambridge,
Cambridge CB3 0WB, UK}
 \email {g.p.paternain@dpmms.cam.ac.uk}


\date{October 2009}


\begin{abstract} We construct examples of Tonelli Hamiltonians on $\T^n$
(for any $n\geq 2$) such that the hypersurfaces corresponding to the
Ma\~n\'e critical value are stable (i.e. geodesible). We also provide a criterion for instability
in terms of closed orbits in free homotopy classes and we show that any stable energy level of a Tonelli Hamiltonian must contain a closed orbit.

\end{abstract}

\maketitle

\section{Introduction}

Let $M$ be a closed manifold and $H:T^*M\to \re$ a Tonelli Hamiltonian, that is, a smooth function
that is strictly fibrewise convex and superlinear. The latter means that for all $x\in M$
and some (and hence any) Riemannian metric
\[\lim_{|p|\to\infty}\frac{H(x,p)}{|p|}=\infty.\]
Given a covering $\Pi:\widehat{M}\to M$ we can associate to $H$ a Ma\~n\'e critical value as follows.
We consider the lift $\widehat{H}$ of $H$ to $\widehat{M}$ and we set
\[c(\widehat{H}):=\inf_{u\in C^{\infty}(\widehat{M},\re)}\sup_{x\in\widehat{M}}\widehat{H}(x,d_{x}u).\]
If $\bar{M}$ is a covering of $\widehat{M}$, then clearly $c(\bar{H})\leq c(\widehat{H})$ and equality holds if $\bar{M}$ is a {\it finite} cover of $\widehat{M}$. If $\widetilde{M}$ is the universal covering of $M$, we will denote by $c_u(H)$ the corresponding critical value. If we consider the abelian cover
given by the kernel of the Hurewicz map $\pi_{1}(M)\mapsto H_{1}(M,\re)$ we obtain what is called
the {\it strict} critical value. We will denote it by $c_0(H)$ and it coincides with
$-\beta(0)$, where $\beta:H_{1}(M,\re)\to\re$ is Mather's minimal action function in homology \cite{M}.
Clearly $c_u(H)\leq c_0(H)$ but in general the inequality could be strict (this fact was first pointed out in \cite{PP}, but see \cite{CFP} for many more examples). On the other hand, $c_u(H)=c_0(H)$
as soon as $\pi_1(M)$ is amenable \cite{FM}.

Let $e$ denote the smallest value of $k$ such that $\Sigma_k:=H^{-1}(k)$ intersects every fibre
of $T^*M$. We will suppose throughout that the critical values $c_u(H)$ and $c_0(H)$ are strictly
bigger than $e$; this ensures in particular that they are regular values of
$H$. This happens rather frequently, e.g. if $H$ has the form $H(x,p)=\frac{1}{2}|p+\theta_x|^2$, where
$\theta$ is a 1-form which is not closed.

The critical values have significance from the point of view of the symplectic topology of the
hypersurfaces $\Sigma_k$ \cite{CFP,PPS}. For example, it is well known that if $k>c_{0}(H)$, then
$\Sigma_k$ is of contact type. It is also known that if $M\neq \mathbb T^2$, then $\Sigma_k$
is not of contact type for $k\in [c_u(H),c_0(H)]$ (cf. \cite[Theorem B.1]{Co}) and it is an open
problem to show that in fact $\Sigma_k$ is never of contact type for $k\in (e,c_u(H))$.
Evidence in favour of a positive answer to this problem when $\dim M=2$ is given in \cite{CMP}.
For the case of $\T^2$ the situation is a bit exceptional due to the fact that is the only case where the
projection map $H_{1}(\Sigma_k,\re)\mapsto H_{1}(M,\re)$ is not injective. Exploiting this fact, an example is given in \cite{CMP} with the property that the Ma\~n\'e critical hypersurface
$\Sigma_{c}$, where $c=c_u(H)=c_0(H)$, is of contact type. On the other hand $\Sigma_c$ can never
be of {\it restricted} contact type.

This paper is prompted by the recent interest in a weaker condition than contact type, namely, the {\it stability} or {\it geodesibility} of these
hypersurfaces \cite{CFP}. Recall that $\Sigma_k$ is said to be stable if there exists a smooth
1-form $\lambda$ such that vectors $v\neq 0$ tangent to the characteristic foliation of
$\Sigma_k$ annihilate $d\lambda$ but $\lambda(v)\neq 0$. This is equivalent to saying that the
characteristic foliation of $\Sigma_k$ is geodesible, i.e., there exists a smooth Riemannian metric
on $\Sigma_k$ such that the leaves of the foliation are geodesics of the metric.
The terminology ``stable'' was coined by Hofer and Zehnder \cite{HZ} who introduced
the notion via another equivalent definition: the hypersurface $\Sigma_k$ is stable
if a neighborhood of it can be foliated by hypersurfaces whose characteristic foliations are
conjugate. We remark that in general these nearby hypersurfaces {\it do not} need to coincide with
$H^{-1}(r)$ for $r$ near $k$.

The question that arises now is: if $M\neq \T^2$, can the Ma\~n\'e critical hypersurfaces $\Sigma_{c_u}$ and $\Sigma_{c_0}$ ever be stable? To motivate our result, let us consider an illustrative class
of Hamiltonians first. Suppose $H(x,p)=\frac{1}{2}|p+\theta_x|^2$, where the 1-form
$\theta$ has the property that $|\theta_{x}|=1$ for all $x\in M$. Moreover, suppose
that the vector field $X$ on $M$ metric-dual to $\theta$ has an invariant probability
measure $\mu$ with zero homology, that is,
\[\int_{M}\omega(X)\,d\mu=0\]
for any closed 1-form $\omega$. This is equivalent to saying that the flow of $X$ has no cross-section \cite{S}.
Since the zero section $p=0$ sits inside $\Sigma_{1/2}$, we see that $c_{0}(H)\leq 1/2$.
The dynamics of the characteristic foliation on the zero section coincides with that of $X$.
The condition that $X$ has an invariant probability measure $\mu$ with zero homology
forces $c_0(H)=1/2$. To see this, let $\xi$ be the Liouville 1-form of $T^*M$. Since
$\xi$ vanishes on the zero section and the characteristic foliation has an invariant measure with zero
homology supported on $p=0$, $\Sigma_{1/2}$ cannot be of contact type. But we know that $\Sigma_k$
is of contact type for any $k>c_{0}(H)$, thus $c_{0}(H)=1/2$.
We already pointed out that the dynamics of $X$ sits inside the dynamics of the characteristic 
foliation, thus if $X$ is not geodesible, $\Sigma_{1/2}$ cannot be stable. This provides a large class
of Tonelli Hamiltonians with critical unstable levels. However, even if $X$ is geodesible,
$\Sigma_{1/2}$ is unstable in all the known examples, even in cases in which the dynamics of
$X$ is as simple as a circle action (cf. \cite[Section 6]{CFP}), and thus it is unavoidable to 
speculate that maybe the Ma\~n\'e critical hypersurfaces are always unstable if $M\neq \T^2$.
In the present paper we will exhibit a Tonelli Hamitonian on $\T^n$ for any $n>1$ for which
$\Sigma_c$ is stable ($c=c_0=c_u$). We will also give a criterion for instability of $\Sigma_{c_u}$
based on Contreras' Theorem D in \cite{Co}. More precisely we show:

\medskip

\noindent{\bf Theorem.} 
{\it If there is a free homotopy class $\Gamma$ in $M$ (including the trivial one) such that there
 is no closed orbit with energy $c_u$ whose projection to $M$ belongs to $\Gamma$, then the 
hypersurface $\Sigma_{c_{u}}$ is not stable. Morever, any stable energy level of a Tonelli Hamiltonian must contain a closed orbit.
There are examples of Tonelli Hamiltonians on
$\T^n$ ($n\geq 2$) such that $\Sigma_{c}$ is stable (for the examples $c=c_u=c_0$).
}

\medskip

The examples are peculiar because stability of $\Sigma_{c_{0}}$ implies that there is a stabilizing 1-form $\lambda$ which is a contact form, but since $\Sigma_{c_{0}}$ cannot be of contact type ($n\geq 3$), $d\lambda$ is not a constant multiple of the canonical symplectic form $d\xi$. Indeed, stability gives a thickening of $\Sigma_{c_{0}}$ by hypersurfaces with conjugated characteristic foliations and the hypersurfaces above $\Sigma_{c_{0}}$ (that is, the hypersurfaces that bound a region that contains $\Sigma_{c_{0}}$) are of contact type. The pullback of the contact form by the conjugation gives the form $\lambda$. Hence we have examples of odd-dimensional manifolds $\Sigma$ with two different exact Hamiltonian structures on $\Sigma$ which share the same characteristic foliation.

The examples have a closed contractible orbit in $\Sigma_{c}$ such that any capping disk has zero symplectic area, thus
these hypersurfaces are {\it not} tame as defined in \cite{CFP} (the tameness condition was important to define an invariant Rabinowitz Floer homology for stable levels). By iterating the loop we see
that we also violate the Palais--Smale condition with energy $c$ on the space of contractible loops.
Thus, these examples show that there is no hope of extending to the stable case Contreras' result in \cite{Co} which asserts that a contact-type energy level must satisfy the Palais--Smale condition, unless additional conditions are added.

The proof of the theorem is based on two elementary technical steps that might be useful for further developments. The first one, stated in Lemma \ref{lemma:change}, shows that the thickening induced by the stabilizing 1-form of a stable level $\Sigma$ of a Tonelli Hamiltonian $H$ can be realized by energy levels of another Tonelli Hamiltonian $\tilde H$. Moreover, the energy levels of $\tilde H$ coincide with those of $H$ outside a neighborhood of $\Sigma$.
The second step, in turn, is Proposition \ref{prop:stable} (see also Remark \ref{rmk:prop}) and it establishes that, under suitable conditions, the ``convex suspension'' of a stable level of a Hamiltonian is also stable. More precisely, let $H$ be a Hamiltonian defined on any symplectic manifold $V$ and consider the Hamiltonian $\bar H: V \times T^*S^1 \to \R$ given by the sum of $H$ and the kinetic energy on $T^*S^1$. Suppose that the level $H^{-1}(k)$ is stable and its stabilizing 1-form admits an extension $\alpha$ such that, for every $r$ close enough to $k$, $\alpha|_{H^{-1}(r)}$ is a stabilizing 1-form and $X_H|_{H^{-1}(r)}$ is a constant multiple of the Reeb vector field (for example, the level $\Sigma$ of the Hamiltonian $\tilde H$ given by the first step). Then $\bar H^{-1}(k)$ is stable as well.

\section{Preliminary results}

We start by showing:

\begin{Lemma} Let $H:T^*M\to\re$ be a Tonelli Hamiltonian
and suppose that the regular energy hypersurface $\Sigma_{k}=H^{-1}(k)$
is stable with stabilizing 1-form $\lambda$.
Then, there exists $\ep > 0$ and a Tonelli Hamiltonian $\tilde{H}$ such that:
\begin{enumerate}
\item $\tilde{H}^{-1}(r) = H^{-1}(r)$ for every $r\leq k-\ep$, $\tilde{H}^{-1}(e^{Ar}+B) = H^{-1}(r)$ for every $r\geq k+\ep$ and $\tilde H^{-1}(e^{Ak}+B) = H^{-1}(k) = \Sigma_k$, where $A$ and $B$ are positive constants;
\item there exists $0<\delta<\ep$ such that $\lambda$ extends to $\tilde{H}^{-1}(e^{A(k-\delta)}+B,e^{A(k+\delta)}+B)$
with the property that $\lambda(X_{\tilde{H}})$ is positive
and constant on $\tilde{H}^{-1}(e^{A(k+r)}+B)$ for each $r\in (-\delta,\delta)$.
Moreover, the characteristic foliations of $\tilde{H}^{-1}(e^{A(k+r)}+B)$ are all conjugate;
\item $i_{X_{\tilde{H}}}d\lambda=0$ on
$\tilde{H}^{-1}(e^{A(k-\delta)}+B,e^{A(k+\delta)}+B)$.
\end{enumerate}
\label{lemma:change}
\end{Lemma}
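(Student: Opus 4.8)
The plan is to realize the Hofer--Zehnder thickening attached to the stabilizing form $\lambda$ as the level sets of $\tilde H$ in a neighbourhood of $\Sigma_k$, and then to graft this family of hypersurfaces onto the level sets of $H$ away from $\Sigma_k$. First I would record the standard neighbourhood model of a stable hypersurface \cite{HZ,CFP}: the pair $(\lambda,\omega_0)$, with $\omega_0=d\xi|_{\Sigma_k}$, is a stable Hamiltonian structure, so a Moser-type argument identifies a neighbourhood $U$ of $\Sigma_k$ in $(T^*M,d\xi)$ symplectically with $\bigl(\Sigma_k\times(-\sigma_0,\sigma_0),\,\omega_0+d(\sigma\lambda)\bigr)$, with $\Sigma_k=\Sigma_k\times\{0\}$, $\sigma$ the new coordinate, and $\lambda$ extended $\sigma$-independently. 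Let $R$ be the characteristic (Reeb) field of $(\lambda,\omega_0)$, i.e. $i_R\omega_0=0$ and $\lambda(R)=1$, after replacing $\lambda$ by $-\lambda$ if necessary so that $\lambda(X_H)>0$. All the leaves $\Sigma_\sigma=\Sigma_k\times\{\sigma\}$ carry the same characteristic foliation, namely the $R$-foliation, and are therefore mutually conjugate.

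Next I would define $\tilde H$ on $U$ by $\tilde H=e^{A(k+\sigma)}+B$ for positive constants $A,B$ to be fixed, and observe that properties (2) and (3) then become automatic. A direct computation in the model shows that the Hamiltonian vector field of the coordinate $\sigma$ equals $R$: since $i_R\omega_0=0$, $i_R(d\sigma\wedge\lambda)=(d\sigma(R))\lambda-(\lambda(R))\,d\sigma=-d\sigma$, and $\sigma\,i_Rd\lambda=0$, one gets $i_R\bigl(\omega_0+d(\sigma\lambda)\bigr)=-d\sigma$. Hence $X_{\tilde H}=A\,e^{A(k+\sigma)}R=A(\tilde H-B)R$, so that $i_{X_{\tilde H}}d\lambda=A(\tilde H-B)\,i_Rd\lambda=0$, which is (3), while $\lambda(X_{\tilde H})=A(\tilde H-B)>0$ is constant on each leaf $\tilde H^{-1}(e^{A(k+\sigma)}+B)$, which is (2); conjugacy of the foliations is the conjugacy of the leaves noted above. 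Thus (2) and (3) hold on every sub-slab $|\sigma|<\delta$, for a $\delta<\ep$ fixed below, independently of the remaining freedom in the construction.

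It remains to extend $\tilde H$ to all of $T^*M$ so that it equals $H$ for $r\le k-\ep$ and $e^{AH}+B$ for $r\ge k+\ep$ while staying Tonelli. Outside $U$ I set $\tilde H$ equal to these two Tonelli Hamiltonians (note $e^{AH}+B$ is Tonelli, being the composition of the increasing convex map $t\mapsto e^{At}+B$ with the convex superlinear $H$); since $H$ is modified only on the compact set $U$, superlinearity is automatic, and one checks directly that $\tilde H^{-1}(e^{Ar}+B)=H^{-1}(r)$ above and $\tilde H^{-1}(e^{Ak}+B)=\Sigma_k$. In the transition annuli $\delta<|\sigma|<\ep$ the level sets of $\tilde H$ must interpolate between the thickening leaves $\Sigma_{\pm\delta}$ and the genuine energy levels $H^{-1}(k\pm\ep)$, and here the exponential labelling supplies the energy window $[k-\ep,\,e^{Ak}+B]$ needed to insert this interpolation smoothly and monotonically, which fixes $A$ and $B$.

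The crux, and the step I expect to be the main obstacle, is keeping $\tilde H$ fibrewise strictly convex throughout $U$: stability is a purely symplectic condition and gives no control over the fibre geometry of the abstract leaves $\Sigma_\sigma$. I would overcome this by shrinking $\sigma_0$ and $\ep$ so that $H^{-1}([k-\ep,k+\ep])\subset U$ and every hypersurface occurring in the construction is $C^\infty$-close to $\Sigma_k$. Since $\Sigma_k$ is a regular level of a Tonelli Hamiltonian and hence bounds a strictly convex domain in each fibre, $C^2$-small perturbations remain strictly fibrewise convex and nested, so the convex interpolation between two such nested convex families can be realised as the level sets of a fibrewise strictly convex function. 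Assembling the three regions and smoothing the labelling then yields the desired Tonelli $\tilde H$, with $\delta<\ep$ the half-width of the slab on which (2) and (3) were verified.
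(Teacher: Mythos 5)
Your construction follows the same route as the paper right up to the decisive step: the paper likewise identifies a neighborhood of $\Sigma_k$ with $\bigl(\Sigma_k\times(-\delta,\delta),\,\omega|_{\Sigma_k}+d(r\lambda)\bigr)$ via the coisotropic neighborhood theorem, takes the product leaves as the thickening with conjugate characteristic foliations, observes that any Hamiltonian constant on the leaves automatically satisfies items (2) and (3), and then grafts this family onto the genuine levels of $H$. Your Reeb-field computation in the model is correct. The transition-annulus interpolation, which you leave at the level of ``insert this interpolation smoothly and monotonically,'' is carried out in the paper by writing the thickening leaves as $C^\infty$-small graphs $f_r$ over $\Sigma_k$ in the energy coordinate, splicing them into the constant graphs via explicit cutoff functions so that $\partial_r g_r>0$, and applying the implicit function theorem; that part of your sketch is incomplete but repairable along exactly these lines.

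The genuine gap is at the step you yourself flag as the crux, and your proposed resolution does not close it. From $C^2$-closeness you correctly conclude that every hypersurface in the family bounds a strictly convex domain in each fibre, but you then assert that ``the convex interpolation between two such nested convex families can be realised as the level sets of a fibrewise strictly convex function.'' That assertion is the whole difficulty, and it is not a consequence of closeness: convexity of all the level sets yields only fibrewise \emph{quasi}-convexity of the function, since the second derivative in the direction transverse to the levels depends on how the values are spaced across the family and is completely unconstrained by the geometry of the individual leaves. The paper's computation makes this explicit: for the interpolated Hamiltonian $\bar H$ the fibre Hessian restricted to $T_p\Sigma_x$ is the (positive) second fundamental form, but $d^2\bar H_x(\nabla\bar H_x,\nabla\bar H_x)$ may well be negative. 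The repair is precisely the exponential relabelling that you treat as mere bookkeeping for the ``energy window'': one sets $\tilde H=h\circ\bar H$ with $h(r)=e^{Ar}+B$ near the critical slab and chooses
\[
A \;>\; \max_{(x,p)\in H^{-1}[k-\ep_1,k+\ep_1]} \frac{\lv d^2\bar H_x(\nabla\bar H_x(p),\nabla\bar H_x(p))\rv}{d\bar H_x(\nabla\bar H_x(p))},
\]
so that $h''=Ah'$ dominates the possibly negative transverse term over the compact transition region; this is where the constants $A$ and $B$ of item (1) actually come from. Ironically, your formula $e^{A(k+\sigma)}+B$ already contains the mechanism, but since you never use $A$ quantitatively, the verification of fibrewise convexity --- the one property that makes $\tilde H$ Tonelli --- is missing from your argument.
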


\begin{proof}
Let $\Sigma = \Sigma_k$ and notice that, since $k$ is a regular value, there exists a neighborhood $U$ of $\Sigma$ and a diffeomorphism $\phi: \Sigma \times (-\epsilon,\epsilon) \to U$ such that $\phi(\Sigma \times \{r\}) = H^{-1}(k+r)$ for every $r \in (-\epsilon,\epsilon)$.

Let $\omega$ be the canonical symplectic form of $T^*M$. Consider on $\Sigma \times (-\epsilon,\epsilon)$ the closed 2-form $\widetilde\omega := \omega|_\Sigma + d(r\lambda)$ and notice that it is symplectic whenever $\epsilon$ is sufficiently small. By the coisotropic neighborhood theorem, there exists $0 < \delta < \epsilon$ and a diffeomorphism $\psi: \Sigma \times (-\delta,\delta) \to V \subset U$ such that $\psi^*\omega = \widetilde\omega$ and $\psi|_{\Sigma}$ is the identity. This gives rise to the extension $\psi_*(\lambda)$ of $\lambda$ to $V$ and a thickening of $\Sigma$ with conjugated characteristic foliations given by $\psi(\Sigma \times \{r\})$. For any Hamiltonian $H$ constant on every $\psi(\Sigma \times \{r\})$ we have that $\psi_{*}(\lambda)(X_H)$ is constant on every $\psi(\Sigma \times \{r\})$ as well and $i_{X_{H}}d(\psi_{*}\lambda)=0$
on $V$.

Given a smooth function $f: \Sigma \to (-\epsilon,\epsilon)$ define $\Sigma_f = \{\phi(x,f(x)) \in U, \forall x \in \Sigma\}$. Consider the family of functions $f_r: \Sigma \to \R$, $r \in (-\delta,\delta)$, given by the relation
$$ \Sigma_{f_r} = \psi(\Sigma \times \{r\}). $$
Clearly $f_r$ is $C^\infty$ small and $\partial_r f_r(x) >0$ for every $(x,r) \in \Sigma \times (-\delta,\delta)$. We will construct a smooth family $g_r: \Sigma \to \R$, $r \in (-\epsilon,\epsilon)$, of $C^\infty$ small functions such that $\partial_r g_r(x) > 0 $ for every $(x,r) \in \Sigma \times (-\ep,\ep)$, $g_r(x) = r$ for $|r|$ close to $\epsilon$ and $g_r = f_r$ for $|r|$ small enough.

For this purpose, we will make use of three auxiliary smooth functions. The first one $\alpha: \R \to \R$ is depicted in Figure \ref{fct_alpha}. It is even and satisfies $\alpha(r) = 0$ if $0\leq r \leq \delta_1$, where $\delta_1$ is close to $\delta$ and satisfies $\delta_1<\delta < \ep$, $\alpha^\prime(r) > 0$ for every $\delta_1 < r < \epsilon_1$, where $\delta<\epsilon_1<\ep$ is close to $\ep$, and $\alpha(r) = 1$ if $r\geq\epsilon_1$.  

\begin{figure}[ht]
\begin{center}
\includegraphics[width=3in]{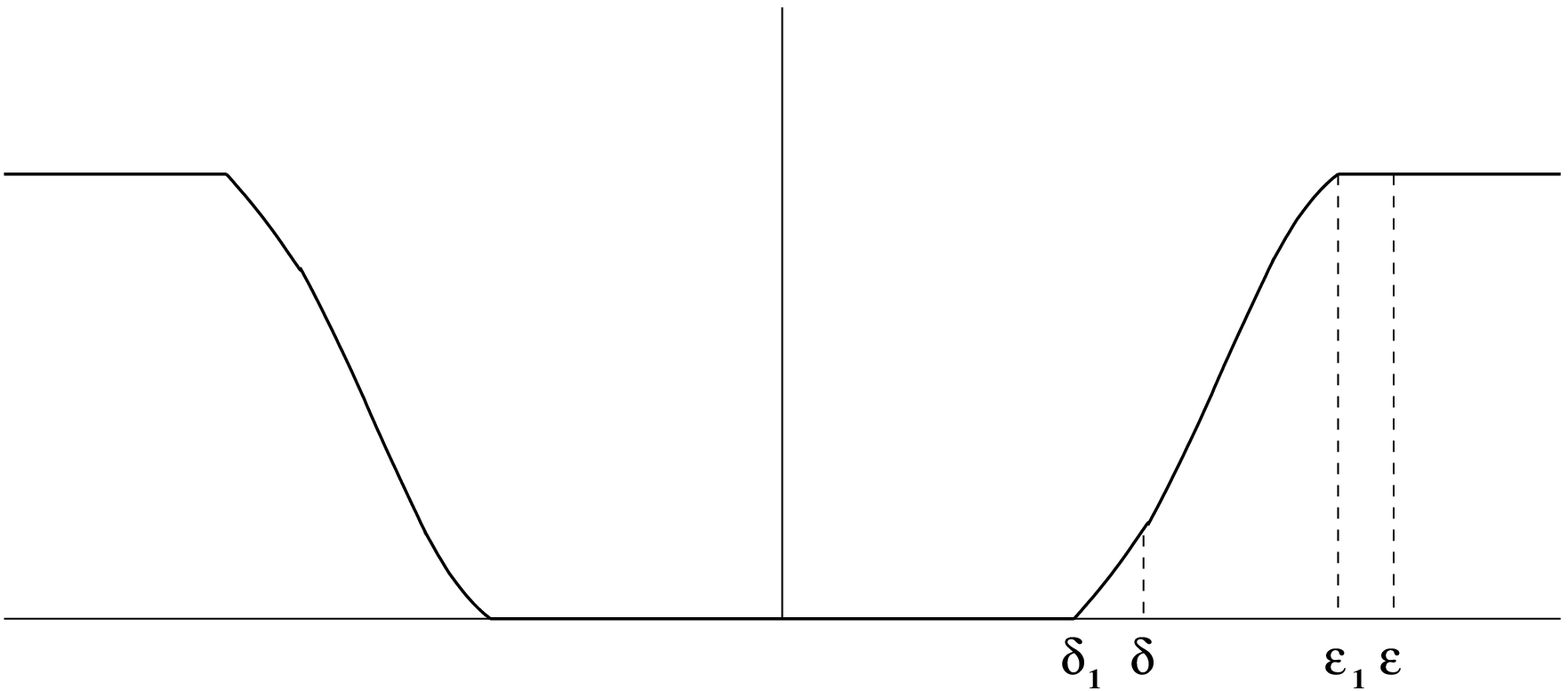}
\caption{\label{fct_alpha} Function $\alpha$.}
\end{center}
\end{figure}

The second and third functions $\beta_0: \R \to \R$ and $\beta_1: \R \to \R$ are outlined in Figures \ref{fct_beta0} and \ref{fct_beta1} respectively. Both are odd and have non-negative derivatives. Choose a constant $\epsilon_2$ close to $\epsilon_1$ satisfying $\delta < \epsilon_2 < \epsilon_1$. The function $\beta_0$ is the identity for $r \geq \epsilon_1$, constant and bigger than $\epsilon_2$ for $\delta_1 \leq r\leq \epsilon_2$ and  $0<\beta_0^\prime(r)<1$ if $\epsilon_2 < r < \epsilon_1$.

\begin{figure}[ht]
\begin{center}
\includegraphics[width=2.5in]{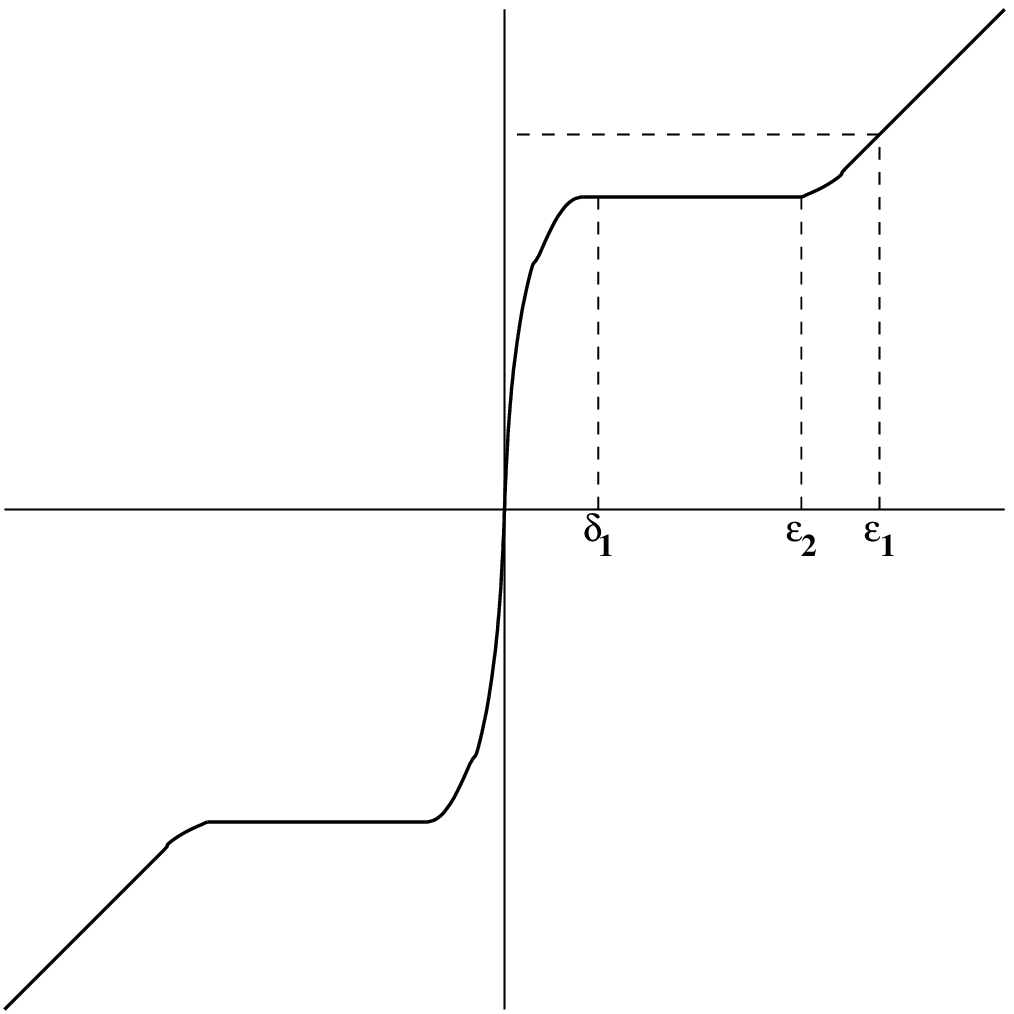}
\caption{\label{fct_beta0} Function $\beta_0$.}
\end{center}
\end{figure}

The function $\beta_1$ is the identity for $0 \leq r \leq \delta_1$, constant and less than $\delta$ for $r\geq \delta$ and its derivative is positive and less than one for $\delta_1 < r < \delta$.

\begin{figure}[ht]
\begin{center}
\includegraphics[width=2.5in]{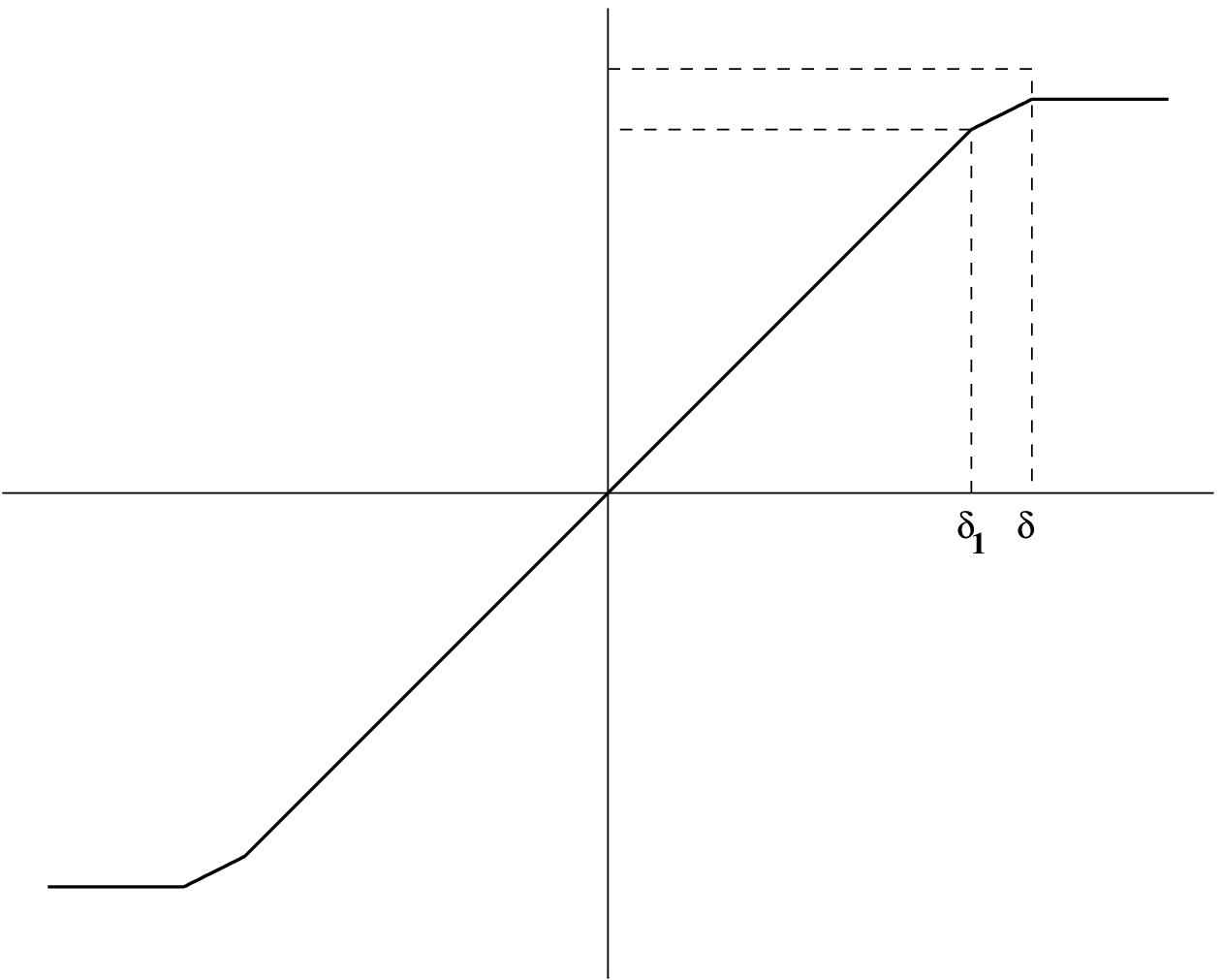}
\caption{\label{fct_beta1} Function $\beta_1$.}
\end{center}
\end{figure}


Now, define
$$ g_r(x) = \alpha(r)\beta_0(r) + (1-\alpha(r))f_{\beta_1(r)}(x). $$
By the properties of the auxiliary functions, $g_r = f_r$ for $-\delta_1 \leq r \leq \delta_1$ and $g_r \equiv r$ for $|r|\geq\epsilon_1$. Moreover, choosing $\delta$ sufficiently small one can make every $g_r$ arbitrarily $C^\infty$ small. It remains to show that $\partial_r g_r>0$ for every $\delta_1 < |r| < \epsilon_1$. We have that
$$ \partial_r g_r(x) = \alpha^\prime(r)(\beta_0(r)-f_{\beta_1(r)}(x)) + (1-\alpha(r))\beta_1^\prime(r)\partial_r f_{\beta_1(r)}(x) + \alpha(r)\beta_0^\prime(r). $$
The second and third terms are clearly non-negative. Since $|f_r(x)| < \ep$ for every $(x,r) \in \Sigma \times (-\delta,\delta)$ and $\epsilon_2$ is arbitrarily close to $\ep$, we can choose it such that $\beta_0(r)-f_{\beta_1(r)}>0$ for $r> \delta_1$ and $\beta_0(r)-f_{\beta_1(r)}<0$ for $r< -\delta_1$. But $\alpha^\prime(r)>0$ if $\delta_1 < r < \epsilon_1$ and $\alpha^\prime(r)<0$ if $-\epsilon_1 < r < -\delta_1$, implying that the first term is positive.

The property $\partial_r g_r>0$ enables us to apply the implicit function theorem to the map
$$ F(x,r,s) = r - g_s(x) $$
to get a smooth function $\bar H: U \to \R$ such that
$$ F(x,r,\bar H(x,r)) = 0 $$
satisfying $\partial_r\bar H(x,r) = -(\partial_s F(x,r,H(x,r)))^{-1}\partial_r F(x,r,H(x,r)) = (\partial_s g_{s}((x))^{-1} > 0$, where $s=\bar H(x,r)$. Note that $\bar H(x,r)$ is the inverse of the function $r \mapsto g_r(x)$ with $x$ fixed. Hence, $\bar H(\Sigma_{g_r}) = r$ for $-\ep < r < \ep$. Extend $\bar H$ to $T^*M$ setting $\bar H(x) = H(x)$ for $x \notin U$. By the construction, $\bar H$ is a smooth function on $T^*M$.

Now, we need to arrange $\bar H$ to make it Tonelli. In order to do it, fix a function $h: \R \to \R$ such that $h^\prime(r)\geq 1$ for every $r$ and define $\tilde H = h \circ \bar H$. Since $\bar H$ coincides with $H$ outside $U$ and $h^\prime(r)\geq 1$, $\tilde H$ is clearly superlinear. We claim that we can choose $h$ such that $\tilde H$ is fiberwise convex. Moreover, it satisfies $h(r) = r$ for $r\leq k-\ep$ and $h(r) = e^{Ar}+B$ for $r\geq k-\ep_1$, where $A$ and $B$ are positive constants.

As a matter of fact, consider a regular energy level $\Sigma$ of $\tilde H$ and a fiber $T^*_x M$ for some $x \in M$. Fix a flat metric on $T^*_x M$ and set $\tilde H_x = \tilde H|_{T^*_x M}$. Since $H$ is fiberwise (strictly) convex and $g_r$ is $C^2$ small for every $r \in (-\ep,\ep)$, $\Sigma_x := \Sigma \cap T^*_x M$ is a hypersurface with positive sectional curvature. But the Hessian of $\tilde H_x$ restricted to $T_p\Sigma_x$ is the second fundamental form of $\Sigma_x$ at $p$ with respect to $-\nabla \tilde H_x(p)$ for every $p \in \Sigma_x$. Thus the Hessian $d^2\tilde H_x$ restricted to $T_p\Sigma_x$ is positive definite.

Consider a function $h$ such that $h^{\prime\prime}(r)\geq 0$ for every $r$, $h(r) = r$ for $r\leq k-\ep$ and $h(r) = e^{Ar}+B$ for $r \geq k-\ep_1$ where $A$ and $B$ are positive constants and $A$ fulfills the condition
$$ A > \max_{(x,p) \in H^{-1}[k-\ep_1,k+\ep_1]} \frac{|d^2\bar H_x(\nabla\bar H_x(p),\nabla\bar H_x(p))|}{d \bar H_x(\nabla\bar H_x(p))}. $$
See Figure \ref{fct_h}. We have that
$$ d^2\tilde H(\nabla\bar H_x(p),\nabla\bar H_x(p)) = h^{\prime\prime}(\bar H(x,p))d\bar H(\nabla \bar H_x(p)) + h^{\prime}(\bar H(x,p))d^2\bar H(\nabla\bar H_x(p),\nabla\bar H_x(p)). $$
\begin{figure}[ht]
\begin{center}
\includegraphics[width=3in]{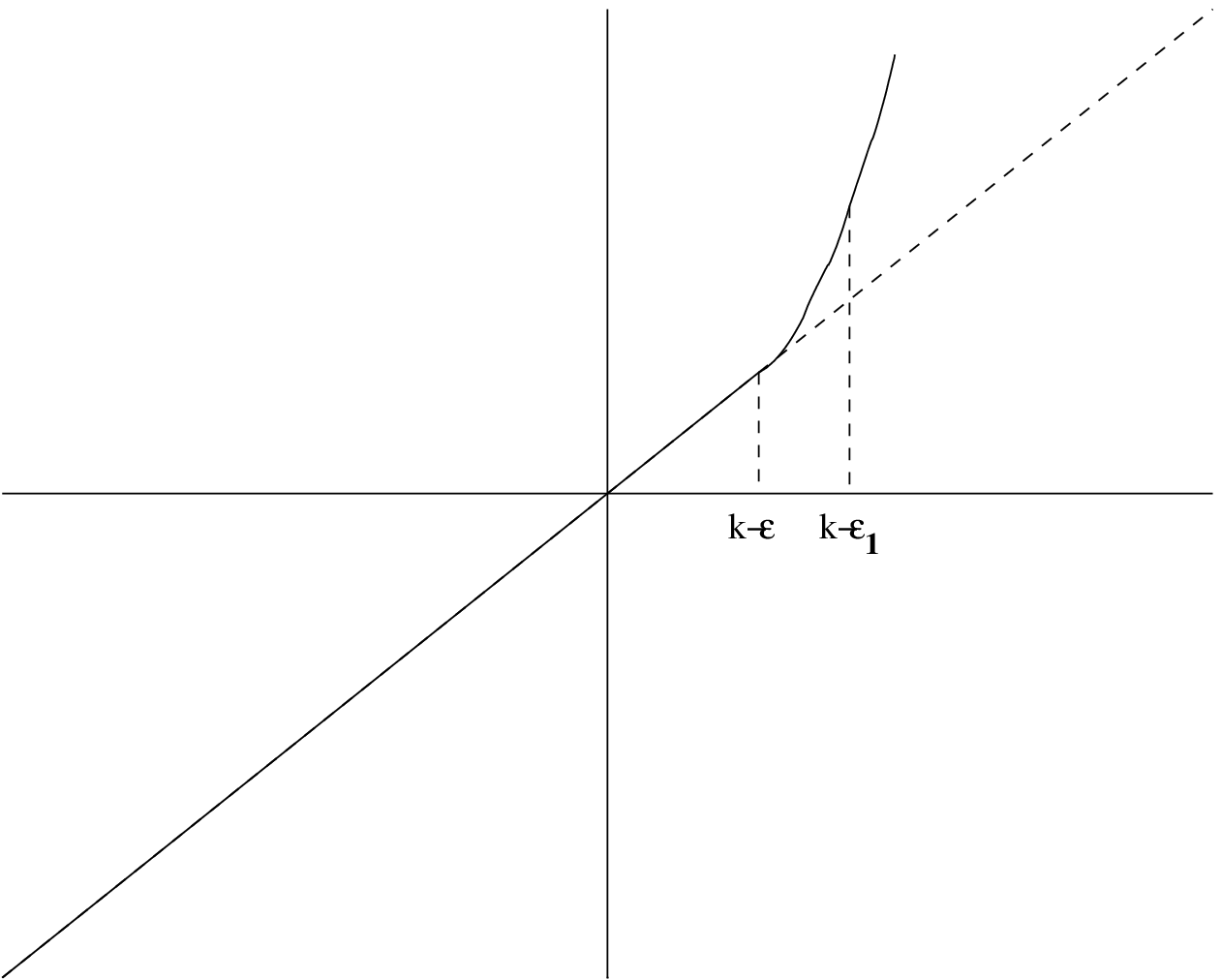}
\caption{\label{fct_h} Function $h$.}
\end{center}
\end{figure}

Since $\bar H$ is convex on $T^*M\setminus H^{-1}(k-\ep_1,k+\ep_1)$ and $h^{\prime\prime}(r) \geq 0$ for every $r$ (which in turn implies that $h^\prime\geq 1$), we conclude that $\tilde H$ is fiberwise convex on $T^*M\setminus H^{-1}(k-\ep_1,k+\ep_1)$. For $(x,p) \in H^{-1}(k-\ep_1,k+\ep_1)$ the condition on $A$ ensures that
$$ h^{\prime\prime}(\bar H(x,p)) = Ah^{\prime}(\bar H(x,p)) > -\frac{d^2\bar H(\nabla\bar H_x(p),\nabla\bar H_x(p))}{d\bar H(\nabla \bar H_x(p))}h^{\prime}(\bar H(x,p)), $$
finishing the proof of the lemma.
\end{proof}

Let $M$ be a closed manifold and $H:T^*M\to \re$ a Tonelli Hamiltonian.
On $N:=M\times S^1$, consider the Tonelli Hamiltonian
$\H(x,p,t,p_t):=H(x,p)+\frac{1}{2}p_{t}^2$, where $(x,p)\in T^*M$ and
$(t,p_t)\in T^*S^1$.

Recall that for a Tonelli Hamiltonian the strict Ma\~n\'e critical value
is given by
\[c_0(H)=\inf_{\theta}\max_{x\in M}H(x,\theta_x),\]
where $\theta$ runs over all smooth closed 1-forms in $M$.

\begin{Lemma} $c_{0}(\H)=c_0(H)$.

\label{lemma:eqc}
\end{Lemma}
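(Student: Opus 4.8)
The plan is to establish the two inequalities $c_0(\H)\le c_0(H)$ and $c_0(\H)\ge c_0(H)$ separately, using only the variational characterization of the strict critical value recalled just above, applied both on $M$ (for $H$) and on $N=M\times S^1$ (for $\H$). Throughout I will write $\pi:N\to M$ for the projection, $\iota_t:M\to N$, $x\mapsto(x,t)$, for the inclusion of the slice $M\times\{t\}$, and I will decompose a covector at $(x,t)$ as $(p,p_t)\in T^*_xM\oplus T^*_tS^1$, so that $\H((x,t),(p,p_t))=H(x,p)+\frac{1}{2}p_t^2$.

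For the upper bound I would feed into the variational formula for $\H$ only the pullbacks of closed $1$-forms on $M$. Given a smooth closed $1$-form $\theta$ on $M$, the pullback $\pi^*\theta$ is a smooth closed $1$-form on $N$ whose $T^*S^1$-component vanishes identically, so that
\[ \H\big((x,t),(\pi^*\theta)_{(x,t)}\big)=H(x,\theta_x), \]
and hence $\max_{(x,t)}\H((x,t),(\pi^*\theta)_{(x,t)})=\max_x H(x,\theta_x)$. Since the $\pi^*\theta$ form a subfamily of the closed $1$-forms on $N$, taking the infimum over $\theta$ yields $c_0(\H)\le\inf_\theta\max_x H(x,\theta_x)=c_0(H)$.

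For the lower bound I would take an arbitrary smooth closed $1$-form $\Theta$ on $N$. The key observation is that for each fixed $t$ the slicewise restriction $\iota_t^*\Theta$ is again closed on $M$, because $d(\iota_t^*\Theta)=\iota_t^*(d\Theta)=0$; moreover the $T^*M$-component of $\Theta_{(x,t)}$ is exactly $(\iota_t^*\Theta)_x$. Since the kinetic term is nonnegative, this gives the pointwise bound $\H((x,t),\Theta_{(x,t)})\ge H(x,(\iota_t^*\Theta)_x)$. Fixing $t$ and maximizing in $x$, the right-hand side is at least $\inf_\theta\max_x H(x,\theta_x)=c_0(H)$, because $\iota_t^*\Theta$ is one admissible closed $1$-form on $M$. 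Therefore $\max_{(x,t)}\H((x,t),\Theta_{(x,t)})\ge c_0(H)$, and taking the infimum over all $\Theta$ gives $c_0(\H)\ge c_0(H)$. Combining the two inequalities proves the claim.

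I do not expect a serious obstacle here; the content is entirely elementary. The only point requiring a moment's care is the lower bound, where one must notice that the restriction of a closed form on $M\times S^1$ to each slice is closed and that the $\frac{1}{2}p_t^2$ term can only increase $\H$. In particular the extra $dt$-cohomology coming from the $S^1$ factor plays no role, since the corresponding momentum component is free to be set to zero in the infimum defining $c_0(\H)$.
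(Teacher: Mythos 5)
Your proof is correct and coincides with the paper's own argument: the upper bound via pullbacks $\pi^*\theta$ of closed forms on $M$, and the lower bound by splitting a closed form on $M\times S^1$ into its $T^*M$ and $T^*S^1$ components and restricting to a slice $M\times\{t_0\}$, where the $T^*M$-component is closed. Your identification of that component as $\iota_{t_0}^*\Theta$ merely makes explicit the closedness the paper asserts for $x\mapsto a_{(x,t_0)}$; there is no substantive difference.
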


\begin{proof} Let $\tau:M\times S^1\to M$ be the first factor projection.
If $\theta$ is a closed 1-form in $M$, then
\[\H(x,t,(\tau^*\theta)_{(x,t)})=H(x,\theta_{x})\]
and the inequality $c_0(\H)\leq c_{0}(H)$ follows immediately.

Now let $\theta$ be an arbitrary closed 1-form in $M\times S^1$.
Write $\theta_{(x,t)}=a_{(x,t)}+b_{(x,t)}$, where
$a_{(x,t)}\in T_{x}^*M$ and $b_{(x,t)}\in T_{t}^*S^1$. Then
\[\H(x,t,\theta_{(x,t)})=H(x,a_{(x,t)})+\frac{1}{2}b_{(x,t)}^2\geq
H(x,a_{(x,t)}).\]
Fix $t_0\in S^1$. Since $x\mapsto a_{(x,t_0)}$ is a closed 1-form
in $M$ we have
\[\max_{(x,t)\in M\times S^1}\H(x,t,\theta_{(x,t)})\geq \max_{x\in M}
\H(x,t_0,\theta_{(x,t_0)})\geq \max_{x\in M} H(x,a_{(x,t_0)})\geq c_0(H).\]
Hence
\[c_{0}(\H)\geq c_0(H)\]
and the lemma is proved.

\end{proof}

Let $k$ be a regular value of $H$ and set $\Sigma_k:=H^{-1}(k)$.

\begin{Proposition} Suppose there is a smooth 1-form $\alpha$ defined
in a neighborhood $H^{-1}(k-\delta,k+\delta)$ 
of $\Sigma_k$ such that:
\begin{itemize}
\item $i_{X_{H}}d\alpha=0$;
\item $\alpha(X_{H})$ is constant and positive on $\Sigma_{r}$
for all $r\in (k-\delta,k+\delta)$.
\end{itemize}
Then, the hypersurface $\bar{\Sigma}_{k}=\H^{-1}(k)$ is stable.
\label{prop:stable}
\end{Proposition}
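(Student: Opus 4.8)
The plan is to construct an explicit stabilizing $1$-form on $\bar\Sigma_k = \bar H^{-1}(k)$ by combining the given $1$-form $\alpha$ on the base factor with a natural $1$-form coming from the $T^*S^1$ factor. Recall that on $N = M \times S^1$ we have $\bar H(x,p,t,p_t) = H(x,p) + \frac{1}{2}p_t^2$, so the Hamiltonian vector field splits as $X_{\bar H} = X_H + p_t\,\partial_t$, where $\partial_t$ generates the $S^1$-direction and $X_H$ is computed in the $T^*M$ factor. The level $\bar\Sigma_k$ is the union over $r \le k$ of the products $\Sigma_r \times \{p_t = \pm\sqrt{2(k-r)}\}$; away from $p_t = 0$ this fibers over the interval of energies on which $\alpha$ is defined. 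First I would write down a candidate stabilizing form of the shape $\Lambda = \tau^*\alpha + p_t\,dt$ (or a suitable reparametrization thereof), where $\tau : N \to M$ is the projection, and verify the two defining conditions of stability against the characteristic foliation of $\bar\Sigma_k$.

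The two conditions to check are that $i_{X_{\bar H}}d\Lambda = 0$ along $\bar\Sigma_k$ and that $\Lambda(X_{\bar H}) \neq 0$ there. For the first condition, I would compute $d\Lambda = \tau^* d\alpha + dp_t \wedge dt$ and contract with $X_{\bar H} = X_H + p_t\,\partial_t$. The term $i_{X_H}(\tau^*d\alpha)$ vanishes by the hypothesis $i_{X_H}d\alpha = 0$, and the $T^*S^1$ contribution $i_{p_t\partial_t}(dp_t\wedge dt)$ together with $dp_t(X_{\bar H})\,dt$ should cancel once one uses that $\bar H$ is constant along the flow, i.e. that $dp_t(X_{\bar H}) = -\partial_t H = 0$ since $H$ does not depend on $t$. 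For the second condition, one has $\Lambda(X_{\bar H}) = \alpha(X_H) + p_t^2$, which is strictly positive precisely because $\alpha(X_H)$ is positive by hypothesis and $p_t^2 \ge 0$; crucially the positivity of $\alpha(X_H)$ on \emph{every} nearby level $\Sigma_r$ (not just $\Sigma_k$) is what lets this work at points of $\bar\Sigma_k$ where $p_t \neq 0$ and hence the $M$-component sits on a level $\Sigma_r$ with $r < k$.

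The main obstacle I anticipate is the locus $p_t = 0$, where the naive form $p_t\,dt$ degenerates in its $dt$-contribution and where $\bar\Sigma_k$ meets $\Sigma_k \times S^1$. At such points $X_{\bar H}$ reduces to $X_H$ and $\Lambda(X_{\bar H}) = \alpha(X_H) > 0$ still holds, so positivity is fine; the delicate issue is rather smoothness and the closedness computation near $p_t=0$, together with confirming that $\Lambda$ really annihilates $d\Lambda$ in the characteristic directions there. I would handle this by checking that $\Lambda$ is a genuine smooth $1$-form on a neighborhood of $\bar\Sigma_k$ in $N$ (it is, since $p_t\,dt$ is globally smooth on $T^*S^1$) and that the two stability conditions are closed conditions that I have verified pointwise on all of $\bar\Sigma_k$, including the turning points $p_t=0$. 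A secondary point to address is that $\bar\Sigma_k$ is a regular level of $\bar H$ — this follows because $d\bar H = dH + p_t\,dp_t$ vanishes only where both $dH = 0$ and $p_t = 0$, which does not occur on the regular level $\Sigma_k$ — so that the notion of characteristic foliation is well defined and the argument above indeed certifies stability.
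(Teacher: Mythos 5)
There is a genuine gap --- in fact two, and together they sink the proposed form $\Lambda=\pi^*\alpha+p_t\,dt$. First, $\Lambda$ is not even defined on all of $\bar{\Sigma}_k$. On $\bar{\Sigma}_k$ the $T^*M$-component satisfies $H=k-p_t^2/2$, so as $|p_t|$ grows the energy $r$ sweeps \emph{all} values from $k$ down to $\min H$; your own decomposition $\bar{\Sigma}_k=\bigcup_{r\le k}\Sigma_r\times\{p_t=\pm\sqrt{2(k-r)}\}$ shows this. As soon as $p_t^2>2\delta$ the point lies outside $H^{-1}(k-\delta,k+\delta)$, where $\alpha$ does not exist and no positivity of $\alpha(X_H)$ is hypothesized. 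Your remark that away from $p_t=0$ the level ``fibers over the interval of energies on which $\alpha$ is defined'' is exactly backwards: it is \emph{near} $p_t=0$ that one stays in the band where $\alpha$ lives. The paper cures this with a cutoff: its form is $\lambda=f(p_t)\,\pi^*\alpha+g(p_t)\,dt$ with $f$ supported in $(-\varepsilon,\varepsilon)$, $\varepsilon<\sqrt{2\delta}$, so that $f(p_t)\pi^*\alpha$ extends by zero to all of $\bar{\Sigma}_k$.

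Second, and more fatally, the verification of $i_{X_{\bar H}}d\Lambda=0$ fails even where $\Lambda$ is defined. One has $i_{X_{\bar H}}(dp_t\wedge dt)=dp_t(X_{\bar H})\,dt-dt(X_{\bar H})\,dp_t$; the first term vanishes as you say, but the second survives and equals $-p_t\,dp_t$, and there is nothing for it to cancel against. Restricted to $T\bar{\Sigma}_k$ it equals $dH$, which is nonzero wherever $p_t\neq 0$ (since $H$ is not constant along $\bar{\Sigma}_k$), so your $\Lambda$ violates the stability condition precisely on the set you flagged as unproblematic, while the locus $p_t=0$ that worried you is actually harmless. The ``suitable reparametrization'' you hedge with is where the entire proof lives: the paper couples the cutoff $f$ to a second profile $g$ solving $g'(s)\,s=-r(s)f'(s)$, where $r(s):=\alpha(X_H)(k-s^2/2)$, i.e. $g(s)=-\int_0^s r(u)f'(u)u^{-1}\,du$ (smooth because $f'\equiv 0$ near $0$), so that the term $-p_tg'(p_t)\,dp_t$ cancels the term $-r(p_t)f'(p_t)\,dp_t$ produced by $df\wedge\pi^*\alpha$. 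Note that this uses the hypothesis that $\alpha(X_H)$ is \emph{constant} on each level $\Sigma_r$ --- which makes $r$ a function of $p_t$ alone on $\bar{\Sigma}_k$ --- a hypothesis your sketch never invokes and which mere positivity would not replace. Finally, positivity of $\lambda(X_{\bar H})=f(p_t)r(p_t)+g(p_t)p_t$ then requires checking that $f$ and $g$ never vanish simultaneously, an additional point the paper verifies from the construction of $f$ and $g$.
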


\begin{proof} Take $\varepsilon$ such that $\varepsilon<\sqrt{2\delta}$
and consider a smooth function $f:\re\to\re$ with the following properties:
\begin{itemize}
\item $f$ is even and has support in $(-\varepsilon, \varepsilon)$;
\item $f\geq 0$;
\item $f(s)\equiv 1$ for $s\in (-\varepsilon/2, \varepsilon/2)$;
\item $f'(s)\leq 0$ for $s\geq 0$.
\end{itemize}
Sine we are assuming that $\alpha(X_{H})$ is a function
of one variable only, we let
\[r(s):=\alpha(X_{H})(k-s^{2}/2).\]
Notice that $r(s)$ is the value of $\alpha(X_H)$ on the level of energy $k-s^{2}/2$. This function is defined on $(-\varepsilon, \varepsilon)$.
Consider now the smooth function $g:\re\to\re$ defined by
\begin{equation}
g(s):=-\int_{0}^{s}\frac{r(u)f'(u)}{u}\,du.
\label{eq:defg}
\end{equation}
Note that the integrand is defined and smooth in all $\re$
since $f$ has support in  $(-\varepsilon, \varepsilon)$ and
$f'=0$ in  $(-\varepsilon/2, \varepsilon/2)$.

Let $\pi:T^*(M\times S^1)\to T^*M$ be the obvious projection.
The 1-form $\pi^*\alpha$ is defined only on $(H\circ\pi)^{-1}(k-\delta,k+\delta)$, but the 1-form $f(p_{t})\pi^*\alpha$ is smooth and defined in
all $\bar{\Sigma}_k$.

We claim that the 1-form
\[\lambda:=f(p_t)\,\pi^*\alpha+g(p_t)\,dt\]
stabilizes $\bar{\Sigma}_{k}$. In order to check the claim note first that
\[X_{\H}=X_{H}+(0,p_t,0,0).\]
Thus
\[\lambda(X_{\H})=f(p_t)\,\alpha(X_{H})+g(p_{t})\,p_t
=f(p_t)r(p_t)+g(p_{t})\,p_t.\]
By our choice of $f$, $fr\geq 0$ and $f(0)r(0)>0$.
Also, from \eqref{eq:defg} we see that
$g'\geq 0$ and $g(0)=0$, hence $g(p_t)p_t\geq 0$ for
all $p_t$. Note also that by construction $f$ and $g$ cannot both vanish at the same time. Indeed, $f(s) \equiv 1$ for $s \in (-\ep/2,\ep/2)$, $g\geq 0$, $g^\prime \geq 0$ and $g^\prime(s)>0$ whenever $f^\prime(s)\neq 0$. It follows
that $\lambda(X_{\H})>0$ at every point of $\bar{\Sigma}_{k}$.

It remains to show that $i_{X_{\H}}d\lambda=0$ on $\bar{\Sigma}_{k}$. 
We compute
\[d\lambda=df\wedge\pi^*\alpha+f\,\pi^* d\alpha+dg\wedge dt.\]
Since $df=f'(p_t)\,dp_t$, $dg=g'(p_t)\,dp_t$ we see that
$df(X_{\H})=dg(X_{\H})=0$. Also $dt(X_{\H})=p_t$. Hence
\[i_{X_{\H}}d\lambda=-\alpha(X_{H})f'(p_t)\,dp_t
+f(p_t)\,i_{X_{\H}}\pi^*d\alpha-p_t g'(p_t)\,dp_t.\]
But since $i_{X_{\H}}\pi^*d\alpha=0$ we have
\[i_{X_{\H}}d\lambda=-r(p_t)f'(p_t)\,dp_t
-p_t g'(p_t)\,dp_t.\]
It follows that $i_{X_{\H}}d\lambda=0$ if and only if
\[-g'(p_t)p_t=r(p_t)f'(p_t)\]
which is a consequence of \eqref{eq:defg}.

\end{proof}

\begin{Remark}
\label{rmk:prop}
{\rm Notice that the argument above can be applied to a Hamiltonian defined on any symplectic manifold with a stable energy level satisfying the hypotheses of the proposition.}
\end{Remark}

\section{Proof of the theorem}

The following lemma provides the basis for the iterative construction
of the examples in $\T^n$ for any $n\geq 2$.

\begin{Lemma} Let $H$ be a Tonelli Hamitonian on $\T^2$ and assume
that $c_0(H)$ is a regular value of $H$. Suppose there is a unique
minimizing measure with zero homology which is ergodic.
 Then, the energy
level $c_0(H)$ is of contact type, but not of restricted contact type.
\label{lemma:ccont}
\end{Lemma}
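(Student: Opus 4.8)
The plan is to prove Lemma \ref{lemma:ccont} by separately establishing the two assertions: that $\Sigma_{c_0}=H^{-1}(c_0(H))$ is of contact type, and that it fails to be of \emph{restricted} contact type. The special feature of $\T^2$ that the excerpt has flagged---namely that the projection $H_1(\Sigma_k,\re)\to H_1(\T^2,\re)$ is \emph{not} injective---is exactly what will be exploited for the positive (contact-type) statement, while the uniqueness and ergodicity of the minimizing measure with zero homology will drive the negative (restricted-contact-type) statement.

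For the contact-type assertion, I would work with the hypothesis that there is a \emph{unique} ergodic minimizing measure $\mu$ of zero homology. Recall $\Sigma_{c_0}$ is of contact type iff there is a 1-form $\lambda$ on $\Sigma_{c_0}$ with $d\lambda=\omega|_{\Sigma_{c_0}}$ (i.e. a primitive of $\omega$ restricted to the hypersurface) such that $\lambda$ is nonvanishing on the characteristic (Hamiltonian) direction $X_H$. Equivalently, since the Liouville form $\xi$ already primitivizes $\omega$, one seeks a closed 1-form $\beta$ on $\Sigma_{c_0}$ so that $(\xi+\beta)(X_H)>0$ everywhere on $\Sigma_{c_0}$; the obstruction is precisely that $\int \xi(X_H)\,d\nu$ may vanish or change sign across the invariant measures $\nu$ supported on $\Sigma_{c_0}$. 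At the critical level the relevant quantity $\int \xi(X_H)\,d\nu$ is non-negative for all invariant probability measures $\nu$, vanishing exactly on the minimizing measures of zero homology. The uniqueness/ergodicity hypothesis pins this degeneracy down to a single measure $\mu$; to produce the contact form one must add a suitable \emph{closed} correction term $\beta$ (this is where the non-injectivity of $H_1(\Sigma_{c_0},\re)\to H_1(\T^2,\re)$ is essential, since it furnishes a closed 1-form on $\Sigma_{c_0}$ that is not a pullback from $\T^2$ and that can be made positive on the support of $\mu$ while leaving the homological constraint undisturbed) so that $(\xi+\beta)(X_H)>0$ on all of $\Sigma_{c_0}$. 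I would quantify this by verifying that the new cohomology class can be chosen to pair positively with the single degenerate direction coming from $\mu$, invoking ergodicity to rule out a nontrivial family of zero-homology minimizers that would obstruct a single linear correction.

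For the failure of \emph{restricted} contact type, the key point is that restricted contact type requires the primitive $\lambda$ to be \emph{globally exact}, i.e. $\lambda=\xi+du$ for a genuine function $u$ on $\Sigma_{c_0}$ (so that $d\lambda=\omega$ with $[\lambda]=[\xi]$ in $H^1$), rather than merely $\xi$ plus a closed form. If $\Sigma_{c_0}$ were of restricted contact type, then integrating $\lambda(X_H)=\xi(X_H)+du(X_H)$ against the invariant measure $\mu$ of zero homology would give $\int \lambda(X_H)\,d\mu=\int \xi(X_H)\,d\mu+\int X_H(u)\,d\mu=\int\xi(X_H)\,d\mu$, since the integral of a derivative along the flow against an invariant measure vanishes. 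But $\mu$ being a zero-homology minimizing measure forces $\int\xi(X_H)\,d\mu=0$ (this is essentially the observation, made already in the introduction for the geodesic-flow class, that $\xi$ vanishes on the zero section and the invariant measure has zero homology). This contradicts $\lambda(X_H)>0$, so no \emph{restricted} contact form can exist. In other words, the contact form produced above necessarily uses a nontrivial closed class $\beta\notin\operatorname{im}(H^1(\T^2)\to H^1(\Sigma_{c_0}))$, which is precisely what distinguishes contact type from restricted contact type here.

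I expect the main obstacle to be the positive direction: constructing the explicit closed 1-form $\beta$ that makes $(\xi+\beta)(X_H)$ strictly positive everywhere. The delicate part is showing that a \emph{single} closed correction suffices, which is where uniqueness and ergodicity of $\mu$ enter decisively---without them one could have a whole extremal face of zero-homology minimizers whose degenerate directions cannot be simultaneously corrected by one cohomology class. I would handle this by analyzing the compact convex set of invariant measures on $\Sigma_{c_0}$, identifying the face on which $\int\xi(X_H)\,d\nu=0$, arguing via ergodic decomposition that under the hypothesis this face reduces to the single point $\{\mu\}$, and then choosing $[\beta]\in H^1(\Sigma_{c_0},\re)$ to pair positively with the homology direction that $\mu$ determines inside $H_1(\Sigma_{c_0},\re)$ but that projects to zero in $H_1(\T^2,\re)$. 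The negative direction, by contrast, is a clean integration-by-parts argument against $\mu$ and should be essentially immediate once the contact form is understood.
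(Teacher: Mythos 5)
Your positive (contact-type) direction has a genuine gap at its crux. You propose to find a closed 1-form $\beta$ on $\Sigma_{c_0}$ with $(\xi+\beta)(X_H)>0$, and you correctly identify that the only possible degeneracy is an invariant measure $\mu$ with $\int\xi(X_H)\,d\mu=0$, i.e.\ one whose Legendre preimage is a minimizing measure with zero homology. But since $\beta$ is closed, $\int\beta(X_H)\,d\mu$ depends only on the asymptotic cycle of $\mu$ in $H_1(\Sigma_{c_0},\re)$, so your correction can only work if that cycle is \emph{nonzero} --- and you never prove this; you simply posit ``the homology direction that $\mu$ determines \dots that projects to zero in $H_1(\T^2,\re)$''. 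Establishing exactly this nonvanishing is the heart of the paper's proof: after normalizing so that $c=c(H)=c_0(H)$ by replacing $H(x,p)$ with $H(x,p+\omega_x)$ (a step you also need --- without it your claim that $\int\xi(X_H)\,d\nu\geq 0$ for \emph{all} invariant measures is false; it holds only for zero-homology ones), the paper invokes \cite[Proposition 2.1]{CMP} to show that a minimizing measure is supported on closed orbits, uses the uniqueness/ergodicity hypothesis to reduce to a single closed orbit, applies Mather's graph theorem to see that this orbit projects to a \emph{simple} closed curve on $\T^2$, and then uses the $\T^2$-specific fact that the velocity lift of a simple closed curve is not homologous to zero in $E^{-1}(c)$ (it represents the fibre class, nontrivial precisely in genus one --- this is where the non-injectivity you flag actually enters). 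Given this, the paper concludes directly via McDuff's criterion \cite{McD}: any invariant measure that is exact as a current in $\Sigma_c$ has $\int\xi(X_H)\,d\mu\neq 0$, because the only candidate for vanishing action turns out not to be exact; no correction form $\beta$ is ever constructed. Note moreover that even granting the nonzero cycle, positivity of $(\xi+\beta)(X_H)$ integrated against the single measure $\mu$ does not give \emph{pointwise} positivity on $\Sigma_{c_0}$: passing from positivity on the cone of exact structure currents to an actual contact form is the Sullivan--McDuff duality, which your direct-construction route would have to rebuild rather than cite.

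Your negative (restricted) direction is essentially the paper's argument, but it misstates the definition: restricted contact type requires a primitive $\tau$ of the symplectic form on all of $T^*\T^2$ with $\tau(X_H)>0$ on $\Sigma_{c_0}$, not $\tau=\xi+du$. Since $H^1(T^*\T^2,\re)\cong H^1(\T^2,\re)\neq 0$, one only gets $\xi=\tau+\pi^*\varphi+df$ with $\varphi$ a closed 1-form on $\T^2$. This matters because the measure $\mu$ is \emph{not} exact as a current in $\Sigma_{c_0}$ (that was the whole point of the first part), so you cannot discard an arbitrary closed correction on the hypersurface; the term $\pi^*\varphi$ can be discarded only because it is pulled back from $\T^2$ and $\mu$ projects to a zero-homology measure there. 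With that fix, your integration-by-parts argument against $\mu$, combined with $\int(L+c)\,d\nu=0$ for the minimizing measure, goes through exactly as in the paper.
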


\begin{proof} By considering Hamiltonians of the form $H(x,p+\omega_x)$, where $\omega$ is a closed 1-form, 
we may suppose that $H$ is such that $c=c(H)=c_{0}(H)$.

Consider the Lagrangian $L:T\T^2\to\re$ associated with $H$ via the
Legendre transform $\ell:T\T^2\to T^*\T^2$. The energy $E$ is given by $H\circ\ell$.
We will use the following elementary
relation which holds for any $(x,v)\in E^{-1}(c)$:
\begin{equation}
L(x,v)+c=\xi_{\ell(x,v)}(d\ell(V(x,v)),
\label{eq:rel}
\end{equation}
where $\xi$ is the Liouville 1-form and $V$ is the Euler-Lagrange vector field of $L$.

Let $\mu$ be an invariant Borel probability measure in $\Sigma_c=H^{-1}(c)$ which is exact as a current. This simply means that it has zero homology in $\Sigma_c$: for any closed 1-form $\psi$ in $\Sigma_c$ we have
\[\int_{\Sigma_{c}}\psi(X_{H})\,d\mu=0,\]
where $X_H$ is the Hamiltonian vector field of $H$.
To show that $\Sigma_c$ is of contact type we need to show that (cf. \cite{McD})
\[\int_{\Sigma_{c}} \xi(X_{H})\,d\mu\neq 0.\]
The measure $\nu:=\ell^*\mu$ in $T\T^2$ has zero homology in $\T^2$. By definition of minimizing measure
\[\int (L+c)\,d\nu\geq 0\]
with equality if and only if $\nu$ is minimizing. Using (\ref{eq:rel}) we see that the only
way in which $\Sigma_c$ can fail to be of contact type is if $\nu$ is minimizing.
By \cite[Proposition 2.1]{CMP} the support of $\nu$ is a union of closed orbits, but since
we are assuming $\nu$ ergodic, the support of $\nu$ consists of a unique closed orbit.
By Mather's graph theorem \cite{M}, this orbit projects to $M$ as a {\it simple} closed
curve $\gamma:[0,T]\to\T^2$. The point now is that the curve 
$[0,T]\ni t\mapsto (\gamma(t),\dot{\gamma}(t))\in E^{-1}(c)$ is not homologous to zero in
$E^{-1}(c)$. Hence the measure $\mu$ that we started with, is not exact as a current
in $\Sigma_c$, and $\Sigma_c$ must be of contact type.

Let us show that $\Sigma_c$ is not of restricted contact type. If it is, there is a 1-form
$\tau$ defined in {\it all} $T^*\T^2$ such that $d\tau=d\xi$ and $\tau(X_{H})>0$ on $\Sigma_{c}$. Since 
$d\tau=d\xi$, we can find a closed 1-form $\varphi$ on $\T^2$ and a smooth function
$f:T^*\T^2\to\re$ such that $\xi=\tau+\pi^*\varphi+df$, where $\pi:T^*\T^2\to\T^2$
is the canonical projection.
Consider a minimizing measure $\nu$ with zero homology in $\T^2$. If we let
$\mu:=\ell_{*}\nu$, we have
\[\int_{\Sigma_{c}}(\pi^*\varphi+df)(X_{H})\,d\mu=0\]
and thus
\[\int_{\Sigma_{c}}\xi(X_{H})\,d\mu=\int_{\Sigma_{c}}\tau(X_{H})\,d\mu>0\]
which by (\ref{eq:rel}) contradicts
\[\int(L+c)\,d\nu=0.\]

\end{proof}

\begin{Remark}{\rm For any $k>c_0(H)$, $\Sigma_k$ is of restricted contact type.
The results in \cite{CMP} together with the argument at the end of the proof of the lemma,
show that for any Tonelli Hamiltonian $H$ on $\T^2$ of the form $H(x,p)=\frac{1}{2}|p+\theta_x|^2$, where
$\theta$ is a 1-form which is not closed, 
and $0<k\leq c_{0}(H)$, the hypersurface $\Sigma_k$ is not of restricted contact type.
}
\end{Remark}

\begin{Remark}{\rm It is easy to give examples of Tonelli Hamiltonians in $\T^2$ with only
one minimizing measure with zero homology which is ergodic. An explicit example
appears in \cite[Example 5.1]{CMP}. We recall it here for the reader's convenience.
Let $\langle\cdot,\cdot\rangle$ be the flat metric.
Consider a smooth vector field $Z$ on $\T^{2}$ such that $Z$ has a simple closed
orbit $\ga$ homotopic to zero and with speed one with respect to the
flat metric.
Take a $C^{\infty}$ function $\psi:\T^{2}\to \re$ such that
$\psi(x)\geq 0$ and $\psi(x)=0$ iff $x\in\ga$.
Set $\theta_{x}(v):=\langle Z(x),v\rangle$ and
$\varphi(x):=|Z(x)|^{2}+2\psi(x)$.
Our Lagrangian will be:
\[L(x,v)=\frac{1}{2}\varphi(x)|v|^{2}-\theta_{x}(v).\]
An easy computation shows that
\[L(x,v)+\frac{1}{2}=\frac{1}{2}\varphi(x)\left|v-\frac{Z(x)}{\varphi(x)}\right|^{2}+\frac{\psi(x)}{\varphi(x)}.\]
It follows that $L(x,v)+1/2\geq 0$ with equality iff $x\in\ga$
and $v=Z(x)$ and therefore $\ga$ is the projection of a closed orbit.
The probability measure associated with $\ga$ is the only minimizing measure
with zero homology. In particular, it follows that
$c_{0}(H)=1/2$, where $H$ is the Hamiltonian convex dual to $L$.

}
\label{remark:example}
\end{Remark}

\noindent{\it Proof of the theorem.} Suppose $\Sigma_c$ is stable. We will show that
there are closed orbits with energy $c$ in every homotopy class $\Gamma$.
By Lemma \ref{lemma:change} with $k=c$ we can consider the new Tonelli Hamiltonian
$\tilde{H}$. Since $H^{-1}(c)=\tilde{H}^{-1}(e^{Ac}+B)=\Sigma_c$, we deduce immediately
from the definition of the critical value that $c_u(\tilde{H})=e^{Ac}+B:=d$.
The key property of $\tilde{H}$ given by item 2 in Lemma \ref{lemma:change} is that
for any $r$ sufficiently close to $d$, the characteristic foliation of $\tilde{\Sigma}_r$ is conjugate
to the one of $\Sigma_c$.

Suppose first $\Gamma\neq 0$. By \cite[Theorem 27]{CIPP2} (see also \cite{Co}), for any $r>d$,
the energy level $\tilde{H}^{-1}(r)$ possesses a closed orbit whose projection to $M$ belongs
to $\Gamma$. Since the characteristic foliations are conjugate for $r$ close to $d$,
the same holds true for $\Sigma_c$.
Suppose now that $\Gamma=0$, then by \cite[Theorem D]{Co} for almost every
$r\in (e^{A(c-\delta)}+B,d)$, the hypersurface $\tilde{H}^{-1}(r)$ carries a closed orbit whose projection
to $M$ is contractible. Again, since the characteristic foliations are conjugate,
the same holds true for $\Sigma_c$.

The argument above clearly shows that if
$\Sigma_k$ is any stable energy level of a Tonelli Hamiltonian, it must contain
a closed orbit.

Let us show the existence of Tonelli Hamiltonians on $\T^n$ for which $\Sigma_c$ is stable.
(Recall that for any Hamitonian on $\T^n$, $c:=c_u(H)=c_0(H)$.)
Using Remark \ref{remark:example} and Lemma \ref{lemma:ccont} there is a Tonelli
Hamiltonian $H$ on $\T^2$, such that $\Sigma_c$ is of contact type and $c$ is a regular
value of $H$. By Lemma \ref{lemma:change} we can replace $H$ by another Hamiltonian with
the same Ma\~n\'e critical hypersurface and critical value $d:=e^{Ac}+B$ (which we still denote by $H$) but such that it satisfies
the hypotheses of Proposition \ref{prop:stable}. Hence there is another
Hamiltonian $\bar{H}$ on $\T^3$ such that $\bar{\Sigma}_d$ is stable.
By Lemma \ref{lemma:eqc}, $H$ and $\bar{H}$ have the same critical value and $\bar{H}$ is 
our desired example in $\T^3$. Iterating this construction we get examples in $\T^n$ for
any $n>1$.

\qed

\subsection{An application of the criterion} We provide an example where the
criterion of the theorem can be easily applied. The example here is different
from the examples in \cite{CFP}.

Let $G={\bf Sol}$ be the semidirect product of $\R^2$ with $\R$, with
coordinates $(x,y,z)$ and multiplication
\begin{equation*} \label{eq:sm}
(x,y,z)\star(x',y',z')=(x+e^{z}x', y+e^{-z}y',z+z').
\end{equation*}
The map $(x,y,z) \mapsto z$ is the epimorphism ${\bf Sol} \to \R$
whose kernel is the normal subgroup $\R^2$.
The group ${\bf Sol}$ is isomorphic to the matrix group
\[\left(\begin{array}{ccc}

e^z&0&x\\
0&e^{-z}&y\\
0&0&1\\

\end{array}\right).\]

It is not difficult to see that ${\bf Sol}$ admits cocompact lattices.
Let $A\in SL(2,\mathbb Z)$ be such that there is $P\in GL(2,\mathbb R)$ with
\[PAP^{-1}=\left(\begin{array}{cc}
\lambda &0\\
0&1/\lambda\\
\end{array}\right)\]
and $\lambda>1$.
There is an injective homomorphism
$$\mathbb Z^2\ltimes_{A}\mathbb{Z}\hookrightarrow \mathbf{Sol}$$
given by $(m,n,l)\mapsto (P(m,n),(\log\lambda)\,l)$ which defines
a cocompact lattice $\Gamma$ in $\mathbf{Sol}$.
The closed 3-manifold $M:=\Gamma\setminus {\bf Sol}$ is a 2-torus
bundle over the circle with hyperbolic gluing map $A$.
The closed 1-form $dz$ generates $H^{1}(M,\re)=\re$ and
the abelian cover $M_0$ of $M$ is given by $\Gamma_0\setminus {\bf Sol}$
where $\Gamma_0\subset \Gamma$ is the copy of $\Z^2$ obtained by setting $l=0$.
The manifold $M_0$ is diffeomorphic to $\T^2\times \re$.

If we denote by $p_{z}$, $p_{x}$ and $p_{y}$ the momenta that
are canonically conjugate to $z$, $x$ and $y$ respectively,
then the functions
\begin{equation*} \label{eq:mom}
\begin{array}{lcl}
M_x &=& e^{z} p_{x}, \\
M_y &=& e^{-z} p_{y}, \\
M_z &=& p_{z}
\end{array}
\end{equation*}
are left-invariant functions on $T^*{\bf Sol}$. The 1-form
$\theta:=e^{-z}dx$ is also left-invariant and we consider the
following left-invariant Tonelli Hamiltonian on ${\bf Sol}$ which
descends to $M$:

\[2H=e^{2z}(p_{x}+e^{-z})^2+e^{-2z}p_{y}^2+p_{z}^2=(M_x+1)^2+M_z^2+M_z^2.\]
Observe that this Hamiltonan is of the form $H(x,p)=\frac{1}{2}|p+\theta_x|^2$,
where the metric is:
\[ds^2=e^{-2z}dx^2+e^{2z}dy^2+dz^2.\]
Obviously $|\theta|=1$. Since the vector field $X$ metric-dual
to $\theta$ has a flow that translates in the $x$-direction we see that
$dz(X)=0$ and therefore any $X$-invariant measure has zero homology. By the argument explained
in the introduction we conclude that $c_0(H)=1/2$. Moreover, since solvable
groups are amenable, $c_u(H)=c_0(H)=1/2$.

\medskip

\noindent{\bf Claim.} All closed orbits in $\Sigma_{1/2}$
are homologous to zero.

\medskip

The Claim shows that a free homotopy class with $l\neq 0$ does not contain
the projection of a closed orbit. Hence the Theorem implies that $\Sigma_{1/2}$
is {\it not} stable.

Let us prove the Claim. Hamilton's equations are:

\begin{equation} \label{eq:XH}
X_H = \left\{ \begin{array}{lclclcl}
\dot{x}   &=& (M_{x}+1)e^z,          & \hspace{10mm} & \dot{M_{x}}   &=& M_{x}M_{z},\\
\dot{y} &=& M_{y}\,e^{-z}, &            & \dot{M_{y}} &=& -M_{y}M_{z},\\
\dot{z} &=& M_{z}, &           & \dot{M_{z}} &=& M_{y}^2 - M_{x}(M_{x}+1).
\end{array}
\right.
\end{equation}
Suppose there is a closed orbit in $\Sigma_{1/2}$ with period $T$ and projection (to $M$) $\gamma$. To show
that it is homologous to zero it suffices to show that
\[\int_{0}^{T}dz(\dot{\gamma})\,dt=\int_{0}^{T}M_{z}\,dt=0.\]
But $m:=M_{x}M_{y}$ is a first integral of $X_{H}$, so if our closed
orbit has $m\neq 0$ then $M_{x}\neq 0$ and from (\ref{eq:XH}) we see that
\[\int_{0}^{T}M_{z}\,dt=\int_{0}^{T}\frac{\dot{M}_{x}}{M_{x}}\,dt=0.\]
If $m=0$ using that the closed orbit lives on $(M_{x}+1)^2+M_{y}^2+M_{z}^2=1$
we deduce that $M_{x}=M_{y}=M_z=0$ and again the orbit is homologous to zero.

\end{document}